\documentclass{amsart}

\usepackage{amsmath,amssymb}
\usepackage{amsthm}
\usepackage{amsrefs}
\usepackage{indentfirst}
\usepackage{mathrsfs}
\usepackage{hyperref}
\usepackage{xcolor}
\usepackage[margin=1.4in]{geometry}
\usepackage{nicefrac}
\usepackage{graphics}
\usepackage{tikz}
\usepackage{bm}

\newcommand{\bR}{\mathbb{R}}
\newcommand{\bfx}{\mathbf{x}}
\newcommand{\bfy}{\mathbf{y}}
\newcommand{\bmeta}{\bm{\eta}}
\newcommand{\bfe}{\mathbf{e}}
\newcommand{\hbfx}{\hat{\mathbf{x}}}

\newcommand{\calP}{\mathcal{P}}
\newcommand{\bfs}{\mathbf{s}}
\newcommand{\bfh}{\mathbf{h}}
\newcommand{\im}{\text{im}}

\newlength{\leftstackrelawd}
\newlength{\leftstackrelbwd}
\def\leftstackrel#1#2{\settowidth{\leftstackrelawd}%
{${{}^{#1}}$}\settowidth{\leftstackrelbwd}{$#2$}%
\addtolength{\leftstackrelawd}{-\leftstackrelbwd}%
\leavevmode\ifthenelse{\lengthtest{\leftstackrelawd>0pt}}%
{\kern-.5\leftstackrelawd}{}\mathrel{\mathop{#2}\limits^{#1}}}

\numberwithin{equation}{section}
\newtheorem{theorem}{Theorem}[section]
\newtheorem{lemma}[theorem]{Lemma}

\newtheorem{corollary}[theorem]{Corollary}
\newtheorem{example}[theorem]{Example}
\allowdisplaybreaks[4]

\title[Representation Theorem for Multivariable Totally Symmetric Functions]{Representation Theorem for Multivariable Totally Symmetric Functions}
\author{Chongyao Chen}
\address{(CC) Department of Mathematics, Duke University, Box 90320, Durham, NC 27708.}
\email{cychen@math.duke.edu}
\author{Ziang Chen}
\address{(ZC) Department of Mathematics, Massachusetts Institute of Technology, 77 Massachusetts Avenue, Cambridge, MA 02139.}
\email{ziang@mit.edu}
\author{Jianfeng Lu}
\address{(JL) Departments of Mathematics, Physics, and Chemistry, Duke University, Box 90320, Durham, NC 27708.}
\email{jianfeng@math.duke.edu}
\date{\today}

\thanks{The work of ZC and JL is supported in part by National Science Foundation via awards DMS-2012286 and DMS-2309378.}

\begin{document}

\begin{abstract}
In this work, we establish a representation theorem for multivariable totally symmetric functions: a multisymmetric continuous function must be the composition of a continuous function and a set of generators of the multisymmetric polynomials. We then study the singularity and geometry of the generators, and show that the regularity may become worse after applying the decomposition.
\end{abstract}

\maketitle

\section{Introduction}

Symmetric and anti-symmetric functions play important roles in physics and chemistry, especially in representing many-body systems (see e.g., \cites{thouless2014quantum,dickhoff2008many, csanyi2000tensor, heugel2019classical}). For more efficient modeling and computation, researchers have been investigating the representation and approximation results for (anti-)symmetric functions \cites{bachmayr2021polynomial, zaheer2017deep, hutter2020representing, huang2021geometry, han2019universal, wagstaff2019limitations}.

We study the representation theorem for symmetric functions: given $d\geq 1$, $n\geq 1$, and $\Omega\subset\bR^d$, a function $f:\Omega^n\rightarrow \bR$ is \emph{totally symmetric}, or \emph{symmetric}, if
\begin{equation}\label{eq:symmetric}
    f(\bfx_{\sigma(1)},\bfx_{\sigma(2)},\dots,\bfx_{\sigma(n)}) = f(\bfx_1,\bfx_2,\dots,\bfx_n),
\end{equation}
for any permutation $\sigma\in S_n$ and any $\bfx_i\in\Omega,\ i=1,2,\dots,n$. It is proved in \cite{zaheer2017deep} that when $d=1$ and $\Omega=[0,1]$, a continuous symmetric function $f:[0,1]^n\rightarrow\bR$ can be decomposed as
\begin{equation}\label{eq:decomp1d}
    f(x_1,x_2,\dots,x_n) = g\left(\sum_{i=1}^n (1,x_i,x_i^2,\dots,x_i^n)\right) =  g\left(n,\sum_{i=1}^n x_i,\sum_{i=1}^n x_i^2, \dots, \sum_{i=1}^n x_i^n \right),
\end{equation}
where $g$ is continuous. One can see that the embedding dimension in \eqref{eq:decomp1d} is essentially $n$ since the constant $1$ can be dropped. It is proved in \cite{wagstaff2019limitations} that such embedding is necessary and hence optimal to exactly represent all continuous symmetric function. Even if the case $d=1$ is well understood, the proof in \cite{zaheer2017deep} cannot be generalized to $d>1$, which will be elaborated in Section~\ref{sec:represent_thm}. Thus, it remains unclear in the previous literature whether similar representation theorem holds for the multivariable case $d>1$, though several approximation results have been established \cites{zaheer2017deep, han2019universal, bachmayr2021polynomial}. There is also work on representing partial permutation-invariant functions \cite{gui2021pine}, but still in the sense of approximation. 

Our aim of this work is to establish an exact representation theorem for multivariable totally symmetric functions (multisymmetric functions in short) and further characterizing the representation. More specifically, we prove that a result similar to \eqref{eq:decomp1d} for arbitrary $d\geq 1$, and show that the regularity of $g$ may be worse than that of $f$, even if both of them are continuous. 
We hope this clarifies the question on representation and helps better understand approximating symmetric functions. 

\section{Representation Theorem}
\label{sec:represent_thm}

Let $\calP_{\text{sym}}^{d,n}(\bR)$ be the $\bR$-algebra consisting of all multisymmetric polynomials with real coefficients, i.e., real polynomials in $\bfx = (\bfx_1,\bfx_2,\dots,\bfx_n)\in(\bR^d)^n$ satisfying \eqref{eq:symmetric}. The algebra of multisymmetric polynomials is well-studied, even when the coefficients are in a general ring (see \cite{briand2004algebra} and references therein). According to \cite{briand2004algebra} (also see \cite{weyl1946classical}), similar to the setting of $d=1$, $\calP_{\text{sym}}^{d,n}(\bR)$ can be generated by
\begin{equation}\label{eq:generator}
    \eta_{\bfs}(\bfx_1,\bfx_2,\dots,\bfx_n) := \sum_{i=1}^n x_{i,1}^{s_1}x_{i,2}^{s_2}\cdots x_{i,d}^{s_d},\quad 0\leq  s_1+s_2+\cdots+s_d\leq n,
\end{equation}
known as multisymmetric power sums, where $\bfs := (s_1,s_2,\dots,s_d)$ and $\bfx_i := (x_{i,1},x_{i,2},\dots,x_{i,d})$, and it has other sets of generators such as the elementary multisymmetric polynomials.

Our main representation theorem is stated as below.
\begin{theorem}\label{thm:main}
Given $d\geq 1$, $n\geq 1$, and a compact subset $\Omega\subset\bR^d$, suppose that $f:\Omega^n\rightarrow \bR$ is totally symmetric and continuous and that $\eta_1,\eta_2,\dots,\eta_m$ generate $\calP_{\text{sym}}^{d,n}(\bR)$ as $\bR$-algebra. Then there exists a unique continuous function $g:\bmeta(\Omega^n)\rightarrow \bR$ such that 
\begin{equation*}
    f(\bfx_1,\bfx_2,\dots,\bfx_n) = g(\bmeta(\bfx_1,\bfx_2,\dots,\bfx_n)),\quad \forall~\bfx_1,\bfx_2,\dots,\bfx_n\in\Omega,
\end{equation*}
where $\bmeta = (\eta_1,\eta_2,\dots,\eta_m)$ and the topology of $\bmeta(\Omega)$ is induced from $\bR^m$.
\end{theorem}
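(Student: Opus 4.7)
The plan is to factor $f$ through the orbit space $\Omega^n/S_n$ and then show that the map induced by $\eta$ realizes $\eta(\Omega^n)$ as a homeomorphic copy of this orbit space. The heart of the argument is a separation-of-orbits property for the generators $\eta_1,\dots,\eta_m$.

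\textbf{Step 1: Orbit separation.} I would first show that, for $\bfx,\bfy\in\Omega^n$, we have $\eta(\bfx)=\eta(\bfy)$ if and only if $\bfy = (\bfx_{\sigma(1)},\dots,\bfx_{\sigma(n)})$ for some $\sigma\in S_n$. The ``if'' direction is immediate from the symmetry of the $\eta_i$. For the ``only if'' direction, since $\eta_1,\dots,\eta_m$ generate $\calP_{\text{sym}}^{d,n}(\bR)$ as an $\bR$-algebra, $\eta(\bfx)=\eta(\bfy)$ forces $p(\bfx)=p(\bfy)$ for \emph{every} multisymmetric polynomial $p$. Thus it suffices to exhibit a family of multisymmetric polynomials that separates $S_n$-orbits on $(\bR^d)^n$. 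The standard choice is power sums of linear forms: for each $\bfv\in\bR^d$ and $k\ge 0$, the polynomial $\sum_{i=1}^n (\bfv\cdot \bfx_i)^k$ is multisymmetric, and for fixed $\bfv$ these polynomials as $k$ varies determine the multiset $\{\bfv\cdot\bfx_1,\dots,\bfv\cdot\bfx_n\}$ via Newton's identities. Using countably many rational $\bfv$ that span $\bR^d$, one recovers the multiset $\{\bfx_1,\dots,\bfx_n\}$, i.e., the $S_n$-orbit of $\bfx$.

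\textbf{Step 2: Construction and uniqueness of $g$.} Define $g:\eta(\Omega^n)\to\bR$ by $g(\eta(\bfx)):=f(\bfx)$. By Step 1, any two preimages of a given point of $\eta(\Omega^n)$ are related by a permutation, and the symmetry of $f$ makes this definition unambiguous. Uniqueness is then automatic, since $g$ is prescribed on all of $\eta(\Omega^n)$ by the defining equation $f=g\circ\eta$.

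\textbf{Step 3: Continuity of $g$.} I would use a standard compact-to-Hausdorff quotient argument. Equip $\Omega^n/S_n$ with the quotient topology; let $\pi:\Omega^n\to\Omega^n/S_n$ be the projection and let $\tilde f:\Omega^n/S_n\to\bR$ be the continuous map induced by $f$. By Step 1, $\eta$ factors as $\eta = \bar\eta\circ\pi$ with $\bar\eta:\Omega^n/S_n\to\eta(\Omega^n)\subset\bR^m$ a continuous bijection. Since $\Omega^n$ is compact, $\Omega^n/S_n$ is compact; since $\bR^m$ is Hausdorff, so is $\eta(\Omega^n)$. A continuous bijection from a compact space to a Hausdorff space is a homeomorphism, so $\bar\eta^{-1}$ is continuous. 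Therefore $g=\tilde f\circ \bar\eta^{-1}$ is continuous, as desired.

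\textbf{Main obstacle.} The only nontrivial ingredient is Step 1, the orbit-separation statement: going from ``the generators agree at $\bfx$ and $\bfy$'' to ``$\bfx$ and $\bfy$ are permutations of each other.'' Once this is in hand, Steps 2 and 3 are formal, and the continuity of $g$ reduces to the textbook fact that a continuous bijection from a compact space to a Hausdorff space is a homeomorphism. I expect the write-up to invest most of its effort in either proving the orbit separation from scratch (via linear-form power sums and Newton's identities, as above) or in quoting it cleanly from the multisymmetric-polynomial literature such as \cite{briand2004algebra}.
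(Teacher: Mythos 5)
Your proof has essentially the same architecture as the paper's: factor both $f$ and $\eta$ through the orbit space $\Omega^n/S_n$, show the induced map $\bar\eta:\Omega^n/S_n\to\eta(\Omega^n)$ is a continuous bijection via an orbit-separation lemma, and use compactness of $\Omega^n/S_n$ together with the Hausdorff property of $\eta(\Omega^n)\subset\bR^m$ to conclude $\bar\eta$ is a homeomorphism, whence $g=\tilde f\circ\bar\eta^{-1}$ is continuous and uniquely determined. The one step where you diverge is the proof of orbit separation (the paper's Lemma~\ref{lem:separate_pt}): you propose linear-form power sums $\sum_i (v\cdot\bfx_i)^k$ together with Newton's identities, whereas the paper constructs a single separating multisymmetric polynomial directly, by recording the multiplicity vectors $(c_j)$, $(c_j')$ with which each distinct point $\hbfx_j$ appears in the two tuples, choosing weights $z_j$ with $\sum_j c_j z_j\neq\sum_j c_j' z_j$, interpolating a polynomial $q$ with $q(\hbfx_j)=z_j$, and setting $p(\bfy_1,\dots,\bfy_n)=\sum_i q(\bfy_i)$. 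Both routes work, but your phrase ``countably many rational $v$ that span $\bR^d$'' is too weak as stated: spanning alone does not recover the multiset of points (e.g.\ $\{(0,0),(1,1)\}$ and $\{(0,1),(1,0)\}$ project to identical multisets on each coordinate axis). What you actually need is a single $v$ that is injective on the finite set of distinct points occurring in $\bfx$ and $\bfy$, i.e.\ a $v$ avoiding the finitely many hyperplanes orthogonal to nonzero differences of those points; such $v$ is generic and can be taken rational. With that repair your Step~1 goes through, though the paper's interpolation construction is arguably more elementary since it sidesteps Newton's identities and the genericity argument entirely.
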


Theorem~\ref{thm:main} generalizes \cite{zaheer2017deep}*{Theorem 7} to the multivariable case and general generators (not limited to power sums). A theorem similar to Theorem~\ref{thm:main} was claimed in \cite{hutter2020representing}*{Theorem 6}, but no proof is given; this work fills the gap. Our proof is inspired by the proof of \cite{zaheer2017deep}*{Theorem 7}, while to make the proof work for arbitrary dimension, we need to argue that $\bmeta(\Omega^n)$ is homeomorphic to the quotient space $\Omega^n/S_n$, not a subspace of $\Omega^n$ as in the proof of \cite{zaheer2017deep}*{Theorem 7}, which will be discussed in details after we present the proof.

\begin{proof}[Proof of Theorem~\ref{thm:main}]
There is a natural group action of $S_n$ on $\Omega^n$:
\begin{equation}\label{eq:action}
    \sigma\ast(\bfx_1,\bfx_2,\dots,\bfx_n) := (\bfx_{\sigma(1)},\bfx_{\sigma(2)},\dots,\bfx_{\sigma(n)}),\quad \forall~\sigma\in S_n,\ \bfx_1,\bfx_2,\dots,\bfx_n\in \Omega,
\end{equation}
and we denote $\pi:\Omega^n\rightarrow \Omega^n/S_n$ as the quotient map. Since $f$ is totally symmetric, i.e., \eqref{eq:symmetric} holds for any $\sigma\in S_n$ and $\bfx_1,\bfx_2,\dots,\bfx_n\in \Omega$, there exists a unique continuous function $\Tilde{f}:\Omega^n/S_n\rightarrow\bR$ such that $f = \Tilde{f}\circ \pi$. In addition, the definition of $\bmeta$ immediately implies that
\begin{equation*}
    \bmeta(\sigma\ast (\bfx_1,\bfx_2,\dots,\bfx_n)) = \bmeta(\bfx_1,\bfx_2,\dots,\bfx_n),\quad \forall~\sigma\in S_n,\ \bfx_1,\bfx_2,\dots,\bfx_n\in \Omega.
\end{equation*}
Therefore, there exists a unique continuous function $\Tilde{\bmeta}:\Omega^n/S_n\rightarrow \bmeta(\Omega^n)$ such that $\bmeta = \Tilde{\bmeta}\circ\pi$. 

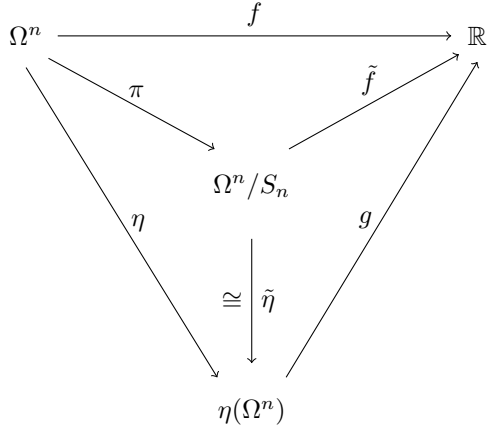
\begin{figure}
    \centering
    \begin{tikzpicture}[
	    constraintb/.style={circle},
			]
			
	    \draw (-3,0) node[constraintb] (x1) {$\Omega^n$};
	    \draw (3,0) node[constraintb] (x2) {$\bR$};
	    \draw (0,-2) node[constraintb] (x3) {$\Omega^n/S_n$};
	    \draw (0,-5) node[constraintb] (x4) {$\bmeta(\Omega^n)$};
			
		\draw[->] (x1.east) -- (x2.west) node[above, pos = 1/2] {$f$};
		\draw[->] (x1.south east) -- (x3.north west) node[above, pos = 0.53] {$\pi$};
		\draw[->] (x3.north east) -- (x2.south west) node[above, pos = 0.47] {$\Tilde{f}$};
		\draw[->] (x1.south) -- (x4.north west) node[right, pos = 1/2] {$\bmeta$};
		\draw[->] (x4.north east) -- (x2.south) node[left, pos = 1/2] {$g$};
		\draw[->] (x3.south) -- (x4.north) node[right, pos = 1/2] {$\Tilde{\bmeta}$} node[left, pos = 1/2] {$\cong$};
		\end{tikzpicture}
    \caption{Commutative diagram for the proof of Theorem~\ref{thm:main}.}
    \label{comm_diag}
\end{figure}

It is clear that $\Tilde{\bmeta}:\Omega^n/S_n\rightarrow \bmeta(\Omega^n)$ is surjective. We then prove that it is also injective. Consider any $\pi(\bfx_1,\bfx_2,\dots,\bfx_n),\pi(\bfx_1',\bfx_2',\dots,\bfx_n')\in \Omega^n/S_n$ with $\Tilde{\bmeta}(\pi(\bfx_1,\bfx_2,\dots,\bfx_n)) = \Tilde{\bmeta}(\bfx_1',\bfx_2',\dots,\bfx_n')$, or equivalently, $\bmeta(\bfx_1,\bfx_2,\dots,\bfx_n)=\bmeta(\bfx_1',\bfx_2',\dots,\bfx_n')$. Note that entries of $\bmeta$ are generators of $\calP_{\text{sym}}^{d,n}(\bR)$. Thus, $p(\bfx_1,\bfx_2,\dots,\bfx_n)=p(\bfx_1',\bfx_2',\dots,\bfx_n')$ holds for any multisymmetric polynomial $p\in \calP_{\text{sym}}^{d,n}(\bR)$. By Lemma~\ref{lem:separate_pt}, there must exist some permutation $\sigma\in S_n$ such that $(\bfx_1',\bfx_2',\dots,\bfx_n') = \sigma\ast (\bfx_1,\bfx_2,\dots,\bfx_n)$, i.e., $\pi(\bfx_1,\bfx_2,\dots,\bfx_n)=\pi(\bfx_1',\bfx_2',\dots,\bfx_n')$, which proves the injectivity.

Since $\Tilde{\bmeta}:\Omega^n/S_n\rightarrow \bmeta(\Omega^n)$ is bijective, its inverse is well-defined. We then show that $\Tilde{\bmeta}^{-1}:\bmeta(\Omega^n)\rightarrow \Omega^n/S_n$ is also continuous, which is equivalent to show that $\Tilde{\bmeta}:\Omega^n/S_n\rightarrow \bmeta(\Omega^n)$ is a closed map. Consider any closed subset $E\subset \Omega^n/S_n$. The continuity of $\pi$ guarantees that $\pi^{-1}(E)$ is closed subset of $\Omega^n$. Furthermore, $\pi^{-1}(E)$ is compact due to the compactness of $\Omega^n$. Note that $\bmeta:\Omega^n\rightarrow \bmeta(\Omega^n)$ is continuous. Thus, both $\bmeta(\Omega^n)$ and $\bmeta(\pi^{-1}(E)) = \Tilde{\bmeta}(E)$ are compact. Then one can conclude that $\Tilde{\bmeta}(E)$ is a closed subset of $\bmeta(\Omega^n)$ and hence that $\Tilde{\bmeta}:\Omega^n/S_n\rightarrow \bmeta(\Omega^n)$ is closed.

Combining all these properties of $\Tilde{\bmeta}:\Omega^n/S_n\rightarrow \bmeta(\Omega^n)$, one can conclude it is indeed a homeomorphism. We can finally complete the proof by setting $g = \Tilde{\bmeta}^{-1}\circ\Tilde{f}$, since it follows from the commutative diagram in Figure~\ref{comm_diag} that $f = g\circ\bmeta$.
\end{proof}

\begin{lemma}\label{lem:separate_pt}
For any $(\bfx_1,\bfx_2,\dots,\bfx_n),(\bfx_1',\bfx_2',\dots,\bfx_n')\in(\bR^d)^n$, if
\begin{equation}\label{no_perm}
    (\bfx_1',\bfx_2',\dots,\bfx_n') \neq \sigma\ast (\bfx_1,\bfx_2,\dots,\bfx_n),\quad \forall~\sigma\in S_n,
\end{equation}
then there exists some multisymmetric polynomial $p\in \calP_{\text{sym}}^{d,n}(\bR)$ such that $p(\bfx_1,\bfx_2,\dots,\bfx_n)\neq p(\bfx_1',\bfx_2',\dots,\bfx_n')$.
\end{lemma}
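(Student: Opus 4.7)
The plan is to invoke the classical orbit-separation principle for finite group actions: since the orbit $S_n\ast(\bfx_1,\dots,\bfx_n)$ is a finite subset of $(\bR^d)^n$ and the hypothesis \eqref{no_perm} asserts that $(\bfx_1',\dots,\bfx_n')$ does not lie in it, I would exhibit an explicit multisymmetric polynomial that vanishes on this orbit but is nonzero at $(\bfx_1',\dots,\bfx_n')$. A convenient candidate is
\[
p(\bfy_1,\dots,\bfy_n) \;:=\; \prod_{\mathbf{z}\,\in\, S_n\ast(\bfx_1,\dots,\bfx_n)} \bigl\|(\bfy_1,\dots,\bfy_n) - \mathbf{z}\bigr\|^2,
\]
where $\|\cdot\|$ denotes the Euclidean norm on $(\bR^d)^n\cong\bR^{dn}$. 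Each factor $\|\bfy - \mathbf{z}\|^2 = \sum_{i,j}(y_{i,j}-z_{i,j})^2$ is polynomial in the coordinates of $\bfy$, so $p$ is indeed a real polynomial.

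The first key step is to verify that $p$ is multisymmetric. Because the action \eqref{eq:action} merely permutes the $n$ blocks $\bfy_i\in\bR^d$, it is an isometry of the Euclidean structure, and therefore $\|\sigma\ast\bfy - \mathbf{z}\|^2 = \|\bfy - \sigma^{-1}\ast \mathbf{z}\|^2$ for every $\sigma\in S_n$. As $\mathbf{z}$ ranges over the full orbit $S_n\ast(\bfx_1,\dots,\bfx_n)$, so does $\sigma^{-1}\ast \mathbf{z}$, and the product $p(\sigma\ast\bfy)$ is simply a reindexing of $p(\bfy)$. Hence $p\in\calP_{\text{sym}}^{d,n}(\bR)$.

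The second step is to evaluate $p$ at the two input tuples. At $(\bfx_1,\dots,\bfx_n)$ the factor indexed by $\mathbf{z}=(\bfx_1,\dots,\bfx_n)$ vanishes, so $p(\bfx_1,\dots,\bfx_n)=0$. At $(\bfx_1',\dots,\bfx_n')$, the hypothesis \eqref{no_perm} places this tuple outside the orbit, so every factor $\|(\bfx_1',\dots,\bfx_n')-\mathbf{z}\|^2$ is strictly positive, yielding $p(\bfx_1',\dots,\bfx_n')>0\neq p(\bfx_1,\dots,\bfx_n)$.

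I do not foresee any serious obstacle: the only delicate point is the invariance of $p$, which hinges on taking the product over the entire orbit and on $S_n$ acting by coordinate permutations. If a more conceptual route is preferred, one can instead apply Lagrange interpolation to produce a polynomial $h$ with $h(\bfx_1,\dots,\bfx_n)=1$ and $h\equiv 0$ on $S_n\ast(\bfx_1',\dots,\bfx_n')$, and then set $p:=\sum_{\sigma\in S_n} h^2\circ(\sigma\ast\cdot)$; this Reynolds-operator construction is the standard proof that the invariant ring of a finite group separates orbits, and it produces the same conclusion here.
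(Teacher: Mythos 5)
Your proof is correct, and it takes a genuinely different route from the paper's. You construct a single separating polynomial
\[
p(\bfy_1,\dots,\bfy_n)=\prod_{\mathbf{z}\in S_n\ast(\bfx_1,\dots,\bfx_n)}\bigl\|(\bfy_1,\dots,\bfy_n)-\mathbf{z}\bigr\|^2,
\]
which vanishes precisely on the orbit of $(\bfx_1,\dots,\bfx_n)$ and is strictly positive elsewhere; the invariance argument via the isometric block-permutation action and reindexing of the orbit is sound, and the evaluation step is immediate. This is the classical ``invariants of a finite group separate orbits'' argument, and it would apply verbatim to any finite linear group action. The paper instead avoids enumerating the orbit: it records the multiplicity with which each distinct point of $\bR^d$ occurs in the two tuples, observes that the multiplicity vectors $(c_j)$ and $(c_j')$ must differ under the hypothesis \eqref{no_perm}, chooses scalars $z_j$ with $\sum c_jz_j\neq\sum c_j'z_j$, interpolates a single-variable polynomial $q:\bR^d\to\bR$ through the data $(\hbfx_j,z_j)$, and sets $p(\bfy_1,\dots,\bfy_n)=\sum_{i=1}^n q(\bfy_i)$. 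The trade-off is instructive: your polynomial has degree growing like the orbit size (up to $2\,n!$), whereas the paper's separating polynomial is of the special ``sum-pooling'' form $\sum_i q(\bfy_i)$ with degree controlled by the interpolation. That form is also thematically aligned with the generators $\eta_{\bfs}$ used throughout the paper, which are exactly sums of monomials over the $n$ blocks, so the paper's construction doubles as evidence that this restricted family already separates orbits. Your alternative sketch at the end (interpolate $h$, then symmetrize $\sum_\sigma h^2\circ(\sigma\ast\cdot)$) is closer in spirit to the paper's interpolation idea but still goes through the Reynolds operator rather than exploiting the multiplicity bookkeeping; either of your two routes is a valid replacement for the paper's argument.
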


\begin{proof}
Denote $\{\hbfx_1,\hbfx_2,\dots,\hbfx_t\}:=\{\bfx_1,\bfx_2,\dots,\bfx_n\}\cup\{\bfx_1',\bfx_2',\dots,\bfx_n'\}$.
Define the counters $c_j := \sum_{i=1}^n \mathbb{I}(\hbfx_j = \bfx_i)$ and $c_j' := \sum_{i=1}^n \mathbb{I}(\hbfx_j = \bfx_i')$, for $j=1,2,\dots,t$, where $\mathbb{I}(\cdot)$ is the indicator function. It follows from \eqref{no_perm} that $(c_1,c_2,\dots,c_t)\neq (c_1',c_2',\dots,c_t')$. Thus, there exist $z_1,z_2,\dots,z_t\in\bR$ such that $\sum_{j=1}^t c_j z_j \neq \sum_{j=1}^t c_j' z_j$. Let $q:\bR^d\rightarrow\bR$ be a polynomial satisfying $q(\hbfx_j) = z_j,\ j=1,2,\dots,t$ (the existence of such $q$ is guaranteed by polynomial interpolation) and let $p\in \calP_{\text{sym}}^{d,n}(\bR)$ be defined via
\begin{equation*}
    p(\bfy_1,\bfy_2,\dots,\bfy_n) := \sum_{i=1}^n q(\bfy_i),\quad \forall~\bfy_1,\bfy_2,\dots,\bfy_n\in\bR^d.
\end{equation*}
Then it holds that
\begin{align*}
    p(\bfx_1,\bfx_2,\dots,\bfx_n) & = \sum_{i=1}^n q(\bfx_i) = \sum_{j=1}^t c_j q(\hbfx_j) = \sum_{j=1}^t c_j z_j \\
    & \neq \sum_{j=1}^t c_j' z_j = \sum_{j=1}^t c_j' q(\hbfx_j) = \sum_{i=1}^n q(\bfx_i') = p(\bfx_1',\bfx_2',\dots,\bfx_n'),
\end{align*}
which completes the proof.
\end{proof}

Let us remark on the connection and difference between our work and \cite{zaheer2017deep} that establishes a special case of Theorem~\ref{thm:main}, say $d=1$. One key step in \cite{zaheer2017deep} is to prove the homeomorphism $\bmeta(\Omega^n)\cong \mathcal{X} := \{(x_1,x_2,\dots,x_n)\in\Omega^n:x_1\leq x_2\leq \cdots\leq x_n\}$ that is actually equivalent to $\bmeta(\Omega^n)\cong \Omega^n/S_n$ when $d=1$. However, the result $\bmeta(\Omega^n)\cong \mathcal{X}$ may not hold high dimensions. As an explicit example, let us consider $d=n=2$ and define $\mathcal{X} = \{(\bfx_1,\bfx_2)\in \Omega^2 : \bfx_1\preceq \bfx_2\}$ that is equipped with the topology induced from the Euclidean topology, with $\Omega = [0,1]^2$ and $\preceq$ being the lexicographic order. One observation is that $\big((1-\frac{1}{n},1), (1,0)\big)\in\mathcal{X}$ for any $n\in\mathbb{N}_+$ while their limiting point $((1,1),(1,0))\notin\mathcal{X}$. This means that $\mathcal{X}$ is not closed, and hence cannot be homeomorphic to $\bmeta(\Omega^n)$. To resolve this issue, we have to work with the quotient space $\Omega^n/S_n$ instead of $\mathcal{X}\subset\Omega^n$, as in the proof of Theorem~\ref{thm:main}.

The next corollary generalizes Theorem~\ref{thm:main} in the sense that the result can hold in the whole space $\bR^d$, not just a compact subset $\Omega\subset\bR^d$.

\begin{corollary}\label{cor:wholespace}
Given $d\geq 1$, $n\geq 1$, suppose that $f:(\bR^d)^n\rightarrow \bR$ is totally symmetric and continuous and that $\eta_1,\eta_2,\dots,\eta_m$ generate $\calP_{\text{sym}}^{d,n}(\bR)$ as $\bR$-algebra. Then there exists a unique continuous function $g:\bmeta((\bR^d)^n)\rightarrow \bR$ such that 
\begin{equation*}
    f(\bfx_1,\bfx_2,\dots,\bfx_n) = g(\bmeta(\bfx_1,\bfx_2,\dots,\bfx_n)),\quad \forall~\bfx_1,\bfx_2,\dots,\bfx_n\in\bR^d,
\end{equation*}
where $\bmeta = (\eta_1,\eta_2,\dots,\eta_m)$ and the topology of $\bmeta(\bR^d)$ is induced from $\bR^m$.
\end{corollary}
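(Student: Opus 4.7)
The plan is to apply Theorem~\ref{thm:main} to a compact exhaustion of $\bR^d$ and glue the resulting representing functions together; the main obstacle will be establishing continuity of the glued function on the noncompact set $\eta((\bR^d)^n)$.

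For each integer $k\geq 1$, set $\Omega_k := [-k,k]^d$. Theorem~\ref{thm:main} produces a unique continuous $g_k:\eta(\Omega_k^n)\to\bR$ with $f|_{\Omega_k^n} = g_k\circ\eta|_{\Omega_k^n}$. Since $\Omega_k^n\subset\Omega_{k+1}^n$, the restriction $g_{k+1}|_{\eta(\Omega_k^n)}$ is continuous and satisfies the same identity on $\Omega_k^n$, so the uniqueness assertion of Theorem~\ref{thm:main} (applied to the compact subset $\Omega_k$) forces $g_{k+1}|_{\eta(\Omega_k^n)} = g_k$. Using $\eta((\bR^d)^n) = \bigcup_{k\geq 1}\eta(\Omega_k^n)$, I would define $g:\eta((\bR^d)^n)\to\bR$ by $g(\bfy) := g_k(\bfy)$ for any $k$ with $\bfy\in\eta(\Omega_k^n)$; the compatibility just noted ensures that $g$ is well-defined and satisfies $f = g\circ\eta$.

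The hard part is continuity of $g$. Since $\eta((\bR^d)^n)$ inherits a metrizable topology from $\bR^m$, it suffices to check sequential continuity. The key observation is that the map $(\bfx_1,\dots,\bfx_n)\mapsto\sum_{i=1}^n \lvert\bfx_i\rvert^2$ is multisymmetric, hence lies in $\calP_{\text{sym}}^{d,n}(\bR)$, and thus can be written as $P\circ\eta$ for some polynomial $P:\bR^m\to\bR$. Given a convergent sequence $\bfy_j\to\bfy$ in $\eta((\bR^d)^n)$, the values $\{\bfy_j\}$ are bounded in $\bR^m$, so $P(\bfy_j)$ is bounded, and any preimages $\bfx^{(j)}\in\eta^{-1}(\bfy_j)$ satisfy $\sum_i \lvert\bfx_i^{(j)}\rvert^2\leq M$ for some $M$. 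Choosing $K$ large enough that every such $\bfx^{(j)}$, together with some preimage of $\bfy$, lies in $\Omega_K^n$, one obtains $\bfy_j,\bfy\in\eta(\Omega_K^n)$ for all $j$. Continuity of $g_K$ on the compact image then yields $g(\bfy_j) = g_K(\bfy_j)\to g_K(\bfy) = g(\bfy)$, as desired.

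Uniqueness of $g$ is immediate: any two continuous candidates must agree on the image of the surjection $\eta:(\bR^d)^n\to\eta((\bR^d)^n)$, which is the entire domain.
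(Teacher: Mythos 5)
Your proof is correct and takes essentially the same route as the paper: exhaust $\bR^d$ by compact sets, glue the locally defined $g$'s using the uniqueness in Theorem~\ref{thm:main}, and establish continuity via the key observation that $\sum_i \lvert\bfx_i\rvert^2$ is multisymmetric and hence factors through $\eta$, forcing $\eta^{-1}$ of bounded sets to be bounded (the paper isolates this as Lemma~\ref{lem:bdd_preimage}). The only differences are cosmetic: cubes versus balls for the exhaustion, and sequential continuity versus continuity on bounded slices.
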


\begin{proof}
According to Theorem~\ref{thm:main}, for any $r\in(0,+\infty)$, there uniquely exists continuous $g_r: \bmeta(\Omega_r^d)\rightarrow\bR$ such that
\begin{equation}\label{eq:gr}
    f(\bfx_1,\bfx_2,\dots,\bfx_n) = g_r(\bmeta(\bfx_1,\bfx_2,\dots,\bfx_n)),\quad \forall~\bfx_1,\bfx_2,\dots,\bfx_n\in\Omega_r,
\end{equation}
where $\Omega_r\subset \bR^d$ is the closed $\ell_2$-ball centered at the origin with radius $r$. It is clear by \eqref{eq:gr} that for any $r,r'\in(0,+\infty)$, $g_r$ and $g_{r'}$ coincide on $\bmeta((\Omega_r\cap\Omega_{r'})^n)$. Thus, the desired function $g:\bmeta((\bR^d)^n)\rightarrow \bR$ can be well-defined via
\begin{equation*}
    g(\bmeta(\bfx_1,\bfx_2,\dots,\bfx_n)) = g_r(\bmeta(\bfx_1,\bfx_2,\dots,\bfx_n)), \quad \text{if }\bfx_1,\bfx_2,\dots,\bfx_n\in\Omega_r.
\end{equation*}
In order to prove the continuity of $g$, it suffices to show that for any bounded subset $Y\subset\bR^m$, $g|_{\bmeta((\bR^d)^n)\cap Y}$ is continuous. By Lemma~\ref{lem:bdd_preimage}, $\bmeta^{-1}(Y)$ is bounded and is hence contained in $\Omega_r^n$ for some $r>0$. Thus, the continuity of $g|_{\bmeta((\bR^d)^n)\cap Y}$ is implied directly by the continuity of $g_r$, which finishes the proof.
\end{proof}

\begin{lemma}\label{lem:bdd_preimage}
Suppose that $\eta_1,\eta_2,\dots,\eta_m$ generate $\calP_{\text{sym}}^{d,n}(\bR)$ as $\bR$-algebra. Then for any bounded subset $Y\subset\bR^m$, $\bmeta^{-1}(Y)$ must be bounded in $(\bR^d)^n$, where $\bmeta = (\eta_1,\eta_2,\dots,\eta_m)$.
\end{lemma}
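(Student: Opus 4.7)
The plan is to exhibit a single multisymmetric polynomial that is coercive on $(\bR^d)^n$ and then express it as a polynomial in the generators $\eta_1,\eta_2,\dots,\eta_m$. A natural candidate is the total squared norm
\[
\Phi(\bfx_1,\bfx_2,\dots,\bfx_n) := \sum_{i=1}^n\sum_{k=1}^d x_{i,k}^2,
\]
which is manifestly invariant under permutations of the particles and therefore lies in $\calP_{\text{sym}}^{d,n}(\bR)$.

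Since by hypothesis $\eta_1,\eta_2,\dots,\eta_m$ generate $\calP_{\text{sym}}^{d,n}(\bR)$ as an $\bR$-algebra, there must exist some polynomial $P\in\bR[y_1,y_2,\dots,y_m]$ with $\Phi = P\circ\eta$ on $(\bR^d)^n$. Given an arbitrary bounded subset $Y\subset\bR^m$, its closure $\overline{Y}$ is compact, so the continuous function $P$ is bounded on $\overline{Y}$ by some finite constant $M$.

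It remains to combine these two ingredients. For any $(\bfx_1,\bfx_2,\dots,\bfx_n)\in\eta^{-1}(Y)$, one has $\Phi(\bfx_1,\bfx_2,\dots,\bfx_n) = P(\eta(\bfx_1,\bfx_2,\dots,\bfx_n))\leq M$, and since every summand defining $\Phi$ is non-negative, each particle satisfies $\|\bfx_i\|^2\leq M$. Hence $\eta^{-1}(Y)$ sits inside the closed ball of radius $\sqrt{M}$ in $(\bR^d)^n$, which yields the desired boundedness.

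The entire argument is short; the only conceptually nontrivial step is the first one, namely recognizing that any generating set of $\calP_{\text{sym}}^{d,n}(\bR)$ must in particular allow one to write down a proper (coercive) multisymmetric polynomial. Once that is noted, the coercivity estimate is routine and requires no explicit knowledge of the polynomial $P$ beyond its existence.
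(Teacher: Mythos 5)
Your proof is correct and follows exactly the same approach as the paper: express the coercive multisymmetric polynomial $\sum_{i,k} x_{i,k}^2$ as a polynomial in the generators $\eta$, then use boundedness of $Y$ to bound the squared norm on $\eta^{-1}(Y)$. The only difference is cosmetic — you spell out the compactness of $\overline{Y}$ and name the bound $M$, where the paper simply observes that $p(Y)$ is bounded.
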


\begin{proof}
There exists a polynomial $p:\bR^m\rightarrow\bR$, such that
\begin{equation*}
    \sum_{i=1}^n \sum_{j=1}^d x_{i,j}^2 = p(\eta_1(\bfx),\eta_2(\bfx),\dots,\eta_m(\bfx)),\quad \forall~\bfx = (\bfx_1,\bfx_2,\dots,\bfx_n)\in (\bR^d)^n,
\end{equation*}
where $\bfx_i = (x_{i,1},x_{i,2},\dots,x_{i,d})$ for $i=1,2,\dots,n$. Thus, it holds that
\begin{equation*}
    \left\{\sum_{i=1}^n \sum_{j=1}^d x_{i,j}^2 : \bfx\in \bmeta^{-1}(Y)\right\} = p(Y)
\end{equation*}
is bounded, which implies the boundedness of $\bmeta^{-1}(Y)$.
\end{proof}

\section{Singularity of Symmetric Decomposition}

In this section, we discuss the singularity of the symmetric decomposition $f = g\circ \bmeta$. In Theorem~\ref{thm:main} and Corollary~\ref{cor:wholespace}, we prove that $g$ inherits the continuity property of $f$. However, it may not be true that $g$ has the same regularity as $f$. The reason is that $\bmeta$ is singular at 
\begin{equation*}
    \mathbf{Sing}_{d,n}:=\left\{\bfx =(\bfx_1,\bfx_2,\dots,\bfx_n)\in (\bR^{d})^n\ |\ \exists~i_1\neq j_2,\ \bfx_{i_1}=\bfx_{i_2}\right\}.
\end{equation*} 
This result is established in the following theorem, where we remark that the number of generators satisfies $m\geq \binom{n+d}{d} > nd$ by \eqref{eq:generator}.

\begin{theorem}\label{thm:singular}
Let $\eta_1,\eta_2,\dots,\eta_m$ generate $\calP_{\text{sym}}^{d,n}(\bR)$ as $\bR$-algebra and let $\bmeta = (\eta_1,\eta_2,\dots,\eta_m)$. Then the locus, where Jacobian matrix $J\bmeta(\bfx)\in\bR^{m\times nd}$ is column-rank-deficient, is the set $\mathbf{Sing}_{d,n}$ defined above. 
\end{theorem}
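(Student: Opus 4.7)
The proof splits into the two containments, and the technical core is the ``non-singular implies full rank'' direction, which I would reduce to a gradient interpolation problem solved via a Hermite-type construction analogous to the one used in Lemma~\ref{lem:separate_pt}.

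\textbf{Singular implies rank deficient.} Suppose $\bfx_{i_1}=\bfx_{i_2}$ with $i_1\neq i_2$, and let $\tau\in S_n$ be the transposition of $i_1$ and $i_2$. For each generator $\eta_\alpha$, the symmetry identity $\eta_\alpha(\bfx)=\eta_\alpha(\tau\ast\bfx)$ and the chain rule yield
\begin{equation*}
\frac{\partial \eta_\alpha}{\partial x_{i_1,k}}(\bfx)=\frac{\partial \eta_\alpha}{\partial x_{i_2,k}}(\tau\ast\bfx),\quad k=1,2,\dots,d.
\end{equation*}
At the fixed point $\bfx=\tau\ast\bfx$ this gives column equalities in $J\eta(\bfx)$ between the $(i_1,k)$-th and $(i_2,k)$-th columns for every $k$, hence column-rank deficiency. (Equivalently, the direction $v$ that equals $+\bfe$ in block $i_1$ and $-\bfe$ in block $i_2$ is a null vector of $J\eta(\bfx)$, since the scalar function $t\mapsto \eta(\bfx+tv)$ is even in $t$ by symmetry.)

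\textbf{Non-singular implies full column rank.} Assume now that $\bfx_1,\dots,\bfx_n$ are pairwise distinct. Because $\eta_1,\dots,\eta_m$ generate $\calP_{\text{sym}}^{d,n}(\bR)$, every $p\in\calP_{\text{sym}}^{d,n}(\bR)$ can be written $p=P\circ\eta$ for some polynomial $P$, so the chain rule gives $\nabla p(\bfx)=J\eta(\bfx)^\top \nabla P(\eta(\bfx))$. Hence the set $\{\nabla p(\bfx):p\in \calP_{\text{sym}}^{d,n}(\bR)\}$ is contained in the row span of $J\eta(\bfx)$, and it suffices to prove that this set is all of $\bR^{nd}$. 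Given any prescribed target $(\bfv_1,\dots,\bfv_n)\in(\bR^d)^n$, I will construct a single-variable polynomial $q:\bR^d\to\bR$ with $\nabla q(\bfx_i)=\bfv_i$ for every $i$; then $p(\bfy_1,\dots,\bfy_n):=\sum_{j=1}^n q(\bfy_j)$ is multisymmetric with the desired gradient. To build $q$, I use the Hermite-style bump
\begin{equation*}
r_i(\bfy):=\ell_i(\bfy)\prod_{j\neq i}\|\bfy-\bfx_j\|^2,
\end{equation*}
where $\ell_i$ is the affine function with $\ell_i(\bfx_i)=0$ and prescribed gradient at $\bfx_i$. The factor $\prod_{j\neq i}\|\bfy-\bfx_j\|^2$ vanishes to order at least two at each $\bfx_j$ with $j\neq i$, so $r_i$ and $\nabla r_i$ vanish there, while at $\bfx_i$ the value $\prod_{j\neq i}\|\bfx_i-\bfx_j\|^2$ is nonzero by distinctness and can be used to scale the gradient to any target $\bfv_i$. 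Setting $q=\sum_{i=1}^n r_i$ finishes the construction.

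\textbf{Main obstacle.} The only non-routine step is the second direction; its crux is producing multisymmetric polynomials whose gradients can be prescribed independently at $n$ distinct points. Distinctness of the $\bfx_i$ is what makes the vanishing-factor trick work, and this is exactly where the hypothesis $\bfx\notin\mathbf{Sing}_{d,n}$ is used. Combining the two directions identifies the column-rank-deficient locus of $J\eta$ with $\mathbf{Sing}_{d,n}$ and completes the proof.
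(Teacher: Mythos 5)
Your first direction (membership in $\mathbf{Sing}_{d,n}$ implies column-rank deficiency) is essentially the paper's argument: both of you exploit the permutation invariance of $\eta$ to produce a pair of identical columns of $J\eta(\bfx)$ when $\bfx_{i_1}=\bfx_{i_2}$. The second direction is where you take a genuinely different route. The paper reduces to showing that $\eta$ is locally injective near $\bfx$ when the $\bfx_i$ are pairwise distinct, and obtains that local injectivity from Lemma~\ref{lem:separate_pt} applied on a small product of disjoint balls. You instead prove full column rank directly by showing the row space of $J\eta(\bfx)$ is all of $\bR^{nd}$: since each $p\in\calP_{\text{sym}}^{d,n}(\bR)$ factors as $P\circ\eta$, the chain rule places every gradient $\nabla p(\bfx)$ in the row space of $J\eta(\bfx)$, and your Hermite-type interpolant $q=\sum_i r_i$ shows these gradients already fill $\bR^{nd}$ once the $\bfx_i$ are distinct. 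Your version is in fact the more conclusive one: local injectivity of a smooth, even polynomial, map does not by itself force a full-rank differential (consider $t\mapsto t^3$ at $t=0$), so the paper's reduction leaves a step unjustified that your gradient-span argument closes cleanly. The only bookkeeping to verify in your construction is that $\nabla r_i(\bfx_k)=0$ for $k\neq i$, which holds because $\prod_{j\neq i}\|\bfy-\bfx_j\|^2$ vanishes to order two at each such $\bfx_k$; you note this, and it is correct.
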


\begin{proof}
We first assume that $\bfx_{i_1} = \bfx_{i_2}$ for some $1\leq i_1 < i_2 \leq n$, and show that $J\bmeta(\bfx)$ is column-rank-deficient. Consider any $j\in\{1,2,\dots,d\}$ and we have that
\begin{align*}
    \frac{\partial\bmeta(\bfx)}{\partial x_{i_1,j}} & = \lim_{t\rightarrow 0} \frac{\bmeta(\bfx_1,\dots,\bfx_{i_1-1}, \bfx_{i_1} + t \bfe_j, \bfx_{i_1+1},\dots, \bfx_{i_2-1},\bfx_{i_2},\bfx_{i_2+1},\dots,\bfx_n) - \bmeta(\bfx)}{t} \\
    & = \lim_{t\rightarrow 0} \frac{\bmeta(\bfx_1,\dots,\bfx_{i_1-1}, \bfx_{i_1}, \bfx_{i_1+1},\dots, \bfx_{i_2-1},\bfx_{i_2} + t \bfe_j,\bfx_{i_2+1},\dots,\bfx_n) - \bmeta(\bfx)}{t} \\
    & = \frac{\partial\bmeta(\bfx)}{\partial x_{i_2,j}},
\end{align*}
where $\bfe_j$ is a vector in $\bR^d$ with the $j$-th entry being $1$ and other entries being $0$. Thus, $J\bmeta(\bfx)$ has two identical columns and is hence column-rank-deficient.

Then we assume that $\bfx_{i_1}\neq \bfx_{i_2}$ for all $1\leq i_1<i_2\leq n$ and prove that $J\bmeta(\bfx)$ is full-column-rank. There exists an invertible matrix $A\in\bR^{d\times d}$ such that $\bfy_i = A\bfx_i,\ i=1,2,\dots,n$ satisfy that $y_{i_1,j}\neq y_{i_2,j}$ for any $1\leq i_1<i_2\leq n$ and any $1\leq j\leq d$. Define $\bm{\phi}:\bR^{nd}\to\bR^{nd}$:
\begin{equation*}
    \bm{\phi}(\bfy) = (\phi_{s,j}(\bfy))_{1\leq s\leq n, 1\leq j\leq d},
\end{equation*}
where
\begin{equation*}
    \phi_{s,j}(\bfy) = \phi_{s,j}(\bfy_1,\bfy_2,\dots,\bfy_n) = \frac{1}{s} \sum_{i=1}^n y_{i,j}^s. 
\end{equation*}
It can be computed that
\begin{equation*}
    J\bm{\phi}(\bfy) = \text{diag}\left(\begin{pmatrix}
        1 & 1 & \cdots & 1 \\
        y_{1,1} & y_{2,1} & \cdots & y_{n,1} \\
        \vdots & \vdots & \ddots & \vdots \\
        y_{1,1}^{n-1} & y_{2,1}^{n-1} & \cdots & y_{n,1}^{n-1}
    \end{pmatrix},\dots, \begin{pmatrix}
        1 & 1 & \cdots & 1 \\
        y_{1,d} & y_{2,d} & \cdots & y_{n,d} \\
        \vdots & \vdots & \ddots & \vdots \\
        y_{1,d}^{n-1} & y_{2,d}^{n-1} & \cdots & y_{n,d}^{n-1}
    \end{pmatrix}\right)\in\bR^{nd\times nd},
\end{equation*}
which is invertible since $y_{1,j},y_{2,j},\dots,y_{n,j}$ are distinct for any $j\in\{1,2,\dots,d\}$. Then define
\begin{equation*}
    \bm{\varphi}(\bfx) = \bm{\varphi}(\bfx_1,\bfx_2,\dots,\bfx_n) = \bm{\phi}(A\bfx_1,A\bfx_2,\dots,A\bfx_n),
\end{equation*}
and one can see that each entry of $\bm{\varphi}$ is a multisymmetric polynomial of $\bfx=(\bfx_1,\bfx_2,\dots,\bfx_n)$. Since $\eta_1,\eta_2,\dots,\eta_m$ generate $\calP_{\text{sym}}^{d,n}(\bR)$, there exists a polynomial map $\bfh:\bR^m\to \bR^{nd}$ such that
\begin{equation*}
    \bm{\varphi} = \bfh \circ \bmeta.
\end{equation*}
Differentiating both sides yields that
\begin{equation*}
    J\bm{\phi}(\bfy) \text{diag}(A,A,\dots,A) = J\bfh(\bmeta(\bfx)) J\bmeta(\bfx).
\end{equation*}
Note that $J\bm{\phi}(\bfy), \text{diag}(A,A,\dots,A)\in\bR^{nd\times nd}$ are both invertible. One can thus conclude that $J\eta(\bfx)\in\bR^{m\times nd}$ is of full-column-rank.
\end{proof}

Theorem~\ref{thm:singular} establishes the location where $\bmeta$ is singular. In general, the geometry of $\im(\bmeta) = \bmeta((\bR^{d})^n)$ seems complicated and we leave the general characterization for future works. In the following, we consider the special case with $n=d=2$ and $\bmeta$ constructed using generators in \eqref{eq:generator}, for which the geometry can be understood rather explicitly.
After omitting the constant polynomial in the set of generators, $\bmeta$ can be written as

\begin{equation*}
     \bmeta(\bfx_1,\bfx_2) = \begin{pmatrix} x_{1,1} + x_{2,1} \\ x_{1,2} + x_{2,2} \\ x_{1,1}^2 + x_{2,1}^2 \\ x_{1,2}^2 + x_{2,2}^2 \\ x_{1,1}x_{1,2} + x_{2,1}x_{2,2} \end{pmatrix}.
 \end{equation*}
 Let
\begin{equation*}
    \begin{split}
        \calP_{z}:\quad\quad\ \ \ \bR^5\quad\quad\ \ &\rightarrow\quad\quad\ \bR^4, \\
     (z_1,z_2,z_3,z_4,w)& \mapsto (z_1,z_2,z_3,z_4),
    \end{split}
\end{equation*}
 be the projection map onto the first four coordinates. It is clear that  
 \begin{align*}
     \calP_z(\im(\bmeta)) & = \left\{(z_1,z_2,z_3,z_4)\in\bR^4 : 2 z_3\geq z_1^2,\ 2 z_4\geq z_2^2\right\} \\
     & = \left\{(z_1,z_3)\in\bR^2 : 2 z_3\geq z_1^2\right\} \times \left\{(z_2,z_4)\in\bR^2 : 2 z_4\geq z_2^2\right\}.
 \end{align*}
 Given $(z_1,z_2,z_3,z_4)\in \calP_z(\im(\bmeta))$, the behavior of $\im(\bmeta)$ can be divided into three cases:
 \begin{itemize}
     \item If $2 z_3 > z_1^2$ and $2 z_4 > z_2^2$, then $x_{1,1}\neq x_{2,1}$, $x_{2,1}\neq x_{2,2}$, and there is no singularity. Furthermore, there are two different $w$ such that $(z_1,z_2,z_3,z_4,w)\in \im(\bmeta)$.
     \item If exactly one of $2 z_3 = z_1^2$ and $2 z_4 = z_2^2$ holds, then only one of $x_{1,1} = x_{2,1}$ and $x_{2,1} = x_{2,2}$ is true, which still leads to non-singular behaviour but there only exists a single $w$ such that $(z_1,z_2,z_3,z_4,w)\in \im(\bmeta)$.
     \item If $2 z_3 = z_1^2$ and $2 z_4 = z_2^2$, then both $x_{1,1} = x_{2,1}$ and $x_{2,1} = x_{2,2}$ are true. There also exists a single $w$ such that $(z_1,z_2,z_3,z_4,w)\in \im(\bmeta)$, at which $\bmeta$ is singular.
 \end{itemize}
 We illustrate the above discussion in Figure~\ref{fig:geometry_eta}.

 \begin{figure}[htb!]
     \centering
    \includegraphics{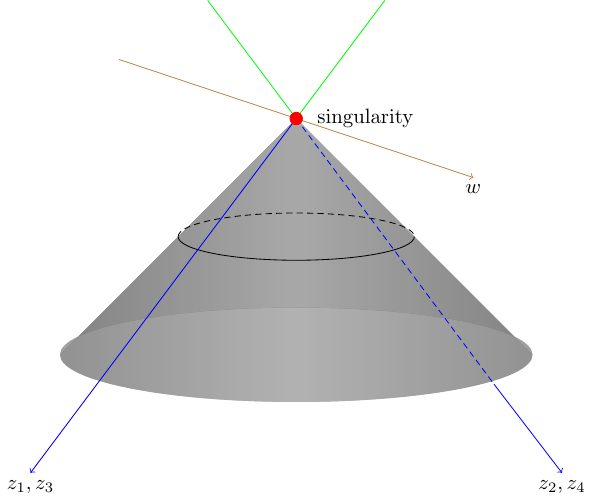}
    \caption{Geometry of $\im(\bmeta) = \bmeta((\bR^2)^2)$. The blue parts correspond to $\{2z_3\geq z_1^2\}$ and $\{2z_4\geq z_2^2\}$. The greens parts represent $\{2z_3 < z_1^2\}$ and $\{2z_4 < z_2^2\}$. The red part is the location of singularity, associated with the boundaries of both blue parts. 
    }
    \label{fig:geometry_eta}
 \end{figure}
 
As the result of the singularity of $\bmeta$, the regularity of $g$ may be worse than $f$. We give two simple examples with $n=2$ and $d=1$ below.

\begin{example}
 Consider $f(\bfx) = f(x_1,x_2) = |x_1| + |x_2|$. Let  $\Omega = [-1,1]$, $\bfx = (\epsilon, -\epsilon)$, and $\bfx' = (2\epsilon, -2\epsilon)$. Then $f(\bfx) = 2\epsilon$, $f(\bfx') = 4\epsilon$, and $\bmeta(\bfx) = (0, 2\epsilon^2)$, $\bmeta(\bfx') = (0, 8\epsilon^2)$. One has
 \begin{equation*}
     \lim_{\epsilon\rightarrow 0+} \frac{|g(\bmeta(\bfx)) - g(\bmeta(\bfx'))|}{\|\bmeta(\bfx) - \bmeta(\bfx')\|} = \lim_{\epsilon\rightarrow 0+}\frac{2\epsilon}{6\epsilon^2} = +\infty.
 \end{equation*}
Thus, $f$ is Lipschitz continuous while $g$ cannot be extended to a Lipschitz-continuous function on $\bR^2$. 
\end{example} 

\begin{example}
 Consider $f(\bfx) = f(x_1,x_2) = x_1^{4/3} + x_2^{4/3}$. Let  $\Omega = [-1,1]$, $\bfx = (\epsilon, -\epsilon)$, and $\bfx' = (0, 0)$. Then $f(\bfx) = 2\epsilon^{4/3}$, $f(\bfx') = 0$, and $\bmeta(\bfx) = (0, 2\epsilon^2)$, $\bmeta(\bfx') = (0, 0)$. One has
 \begin{equation*}
     \lim_{h\rightarrow 0+} \frac{g(0,h) - g(0,0)}{h} = \lim_{\epsilon\rightarrow 0}\frac{f(\epsilon, - \epsilon) - f(0,0)}{2\epsilon^2} = \lim_{\epsilon\rightarrow 0} \frac{2\epsilon^{4/3}}{2\epsilon^2} = +\infty.
 \end{equation*}
 Therefore, $f$ is $\mathcal{C}^1$, i.e., continuously differentiable, while $g$ cannot be extended to a $\mathcal{C}^1$ function on $\bR^2$.
\end{example}

\bibliographystyle{amsxport}
\bibliography{references}

\end{document}